\newtheorem{thm}{Theorem}[section]
\newtheorem{pro}[thm]{Proposition}
\newtheorem{example}[thm]{Example}
\journal{}
\begin{document}
\begin{spacing}{1.15}
\begin{CJK*}{GBK}{song}
\begin{frontmatter}
\title{\textbf{A combinatorial problem related to the classical probability}}
\author{Jiang Zhou}\ead{zhoujiang@hrbeu.edu.cn}
\address{College of Mathematical Sciences, Harbin Engineering University, Harbin 150001, PR China}

\begin{abstract}
In the classical probability model, let $f(n)$ be the maximum number of pairwise independent events for the sample space with $n$ sample points. The determination of $f(n)$ is equivalent to the problem of determining the maximum cardinality of specific intersecting families on the set $\{1,2,\ldots,n\}$ . We show that $f(n)\leq n+1$, and $f(n)=n+1$ if there exists a Hadamard matrix of order $n$.
\end{abstract}

\begin{keyword}
Pairwise independent events, Intersecting family, Symmetric design\\
\emph{AMS classification (2020):} 05D05, 60A99, 05B20
\end{keyword}
\end{frontmatter}

\section{Introduction}
We only consider the classical probability model in this paper. Let $\Omega=\{1,2,\ldots,n\}$ denote the sample space with $n$ sample points. For an event $A\subseteq\Omega$, the probability of $A$ is
\begin{equation}
\mathbb{P}(A)=\frac{|A|}{|\Omega|}=\frac{|A|}{n}.\tag{1.1}
\end{equation}
Two events $A,B\subseteq\Omega$ are called \textit{independent} if $\mathbb{P}(A\cap B)=\mathbb{P}(A)\mathbb{P}(B)$. The $t$ distinct events $A_1,A_2,\ldots,A_t\subseteq\Omega$ are called \textit{pairwise independent} if $A_i$ and $A_j$ are independent for any $1\leq i<j\leq t$. By the formula (1.1), $A_1,A_2,\ldots,A_t\subseteq\Omega$ are pairwise independent if and only if
\begin{equation}
|A_i\cap A_j|=n^{-1}|A_i||A_j|\tag{1.2}
\end{equation}
for any $1\leq i<j\leq t$. Let $f(n)$ denote the maximum number of pairwise independent events for the sample space with $n$ sample points.

For a set $\Omega$, we say that $\mathcal{F}=\{A_1,A_2,\ldots,A_t\}$ is a \textit{set system} on $\Omega$ if $A_1,A_2,\ldots,A_t$ are distinct subsets of $\Omega$. A set system $\mathcal{F}$ is called an \textit{intersecting family} \cite{Bollobas} if $A\cap B\neq\emptyset$ for any $A,B\in\mathcal{F}$. Let $g(n)$ denote the maximum cardinality of intersecting families on the set $\{1,2,\ldots,n\}$ satisfying (1.2). Clearly, we have $f(n)=g(n)+1$, because $A_1,A_2,\ldots,A_t,\emptyset$ are pairwise independent events if the intersecting family $\mathcal{F}=\{A_1,A_2,\ldots,A_t\}$ satisfies (1.2).

A $2$-$(v,k,\lambda)$ design \cite{Beth,Brouwer} is a set of $v$ points, with a set of blocks (subsets of points) of size $k$ such that each pair of points occurs in exactly $\lambda$ blocks. A $2$-$(v,k,\lambda)$ design is called \textit{symmetric} if the number of blocks is $v$. It is known \cite{Beth} that any two distinct blocks in a symmetric $2$-$(v,k,\lambda)$ design intersect in precisely $\lambda$ points. A $2$-$(v,k,\lambda)$ design can be regarded as a set system $\mathcal{F}=\{A_1,A_2,\ldots,A_t\}$ on $A=\{1,2,\ldots,v\}$ such that $|A_1|=\cdots=|A_t|=k$ and each pair of elements in $A$ occurs in exactly $\lambda$ subsets among $A_1,A_2,\ldots,A_t$.

In this paper, we show that $f(n)=g(n)+1\leq n+1$, and $f(n)=n+1$ if there exists a Hadamard matrix of order $n$. We also obtain lower bounds on $g(n)$ by using $2$-designs and generalized Johnson graphs.

\section{Main results}
Let $I_n$ denote the identity matrix of order $n$. A square matrix $H$ of order $n$, with entries $1$ or $-1$, is called a \textit{Hadamard matrix} if $HH^\top=nI_n$, where $H^\top$ denotes the transpose of $H$.
\begin{thm}\label{thm3.1}
For any positive integer $n$, we have
\begin{eqnarray*}
g(n)\leq n,
\end{eqnarray*}
and the equality holds if there exists a Hadamard matrix of order $n$.
\end{thm}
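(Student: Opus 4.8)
The plan is to prove the two halves separately: the upper bound $g(n)\le n$ by a linear-algebra (dimension) argument, and the matching lower bound $g(n)\ge n$ by an explicit construction from a Hadamard matrix.

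For the upper bound, let $\mathcal{F}=\{A_1,\ldots,A_t\}$ be an intersecting family on $\{1,\ldots,n\}$ satisfying (1.2), write $a_i=|A_i|$, and let $v_i\in\mathbb{R}^n$ be the characteristic vector of $A_i$, so that $\langle v_i,v_j\rangle=|A_i\cap A_j|$ and $\langle v_i,\mathbf{1}\rangle=a_i$, where $\mathbf{1}$ is the all-ones vector. The key device is to center each vector: set $w_i=v_i-\frac{a_i}{n}\mathbf{1}$. A direct computation gives $\langle w_i,w_j\rangle=|A_i\cap A_j|-\frac{a_ia_j}{n}$, which vanishes for $i\neq j$ exactly by condition (1.2), while $\langle w_i,\mathbf{1}\rangle=0$ for every $i$. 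Thus the $w_i$ are pairwise orthogonal vectors lying in the hyperplane $\mathbf{1}^\perp$ of dimension $n-1$. Since $\|w_i\|^2=a_i(n-a_i)/n$, a vector $w_i$ is zero precisely when $A_i=\emptyset$ or $A_i=\Omega$; as $\mathcal{F}$ is intersecting we have $\emptyset\notin\mathcal{F}$, so at most one member (namely $\Omega$) yields the zero vector. The remaining $w_i$ are nonzero and mutually orthogonal in an $(n-1)$-dimensional space, hence at most $n-1$ of them exist, giving $t\le(n-1)+1=n$.

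For the lower bound, suppose $H$ is a Hadamard matrix of order $n$, and denote its $i$-th row by $h_i$. I would first normalize $H$ (by negating columns) so that its first row is all $+1$; this replaces $H$ by $HD$ for a diagonal $\pm1$ matrix $D$, which preserves $HH^\top=nI_n$ and hence row-orthogonality. Define $A_i=\{j:H_{ij}=1\}$ for $i=1,\ldots,n$. The first row gives $A_1=\Omega$, and orthogonality of each later row to the all-ones first row forces $|A_i|=n/2$ for $i\ge2$. For $i\neq k$ with $i,k\ge2$, expanding $0=\langle h_i,h_k\rangle$ in terms of the four overlap counts (both $+1$, both $-1$, and the two mixed types) yields $|A_i\cap A_k|=n/4$, which is exactly $a_ia_k/n$, so (1.2) holds; it also holds trivially whenever one of the two sets is $\Omega$. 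Finally $n/4\ge1$ for every admissible Hadamard order $n\ge4$ (the cases $n=1,2$ being immediate), so the family $\{A_1,\ldots,A_n\}$ is intersecting and consists of $n$ distinct sets. Hence $g(n)\ge n$, and combined with the upper bound $g(n)=n$.

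The part requiring the most care is pinning down the additive constant $+1$ in the upper bound and matching it in the construction. The orthogonality argument only bounds the number of \emph{nonzero} centered vectors by $n-1$; recovering the true value $n$ hinges on the observation that $\Omega$ contributes a zero vector yet is a legitimate member of an intersecting family satisfying (1.2). The Hadamard construction must therefore deliberately include the all-ones row (giving $A_1=\Omega$) to realize the extra set, and one must verify the intersecting condition $n/4\ge1$ separately rather than merely the arithmetic identity (1.2).
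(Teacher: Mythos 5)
Your proof is correct, and both halves take a somewhat different route from the paper's. For the upper bound, the paper forms the $n\times t$ incidence matrix $B$, writes $B^\top B=n^{-1}uu^\top+D$ with $D$ diagonal, and shows this matrix is positive definite, whence $t=\mathrm{rank}(B^\top B)\leq n$. Your centering $w_i=v_i-\frac{a_i}{n}\mathbf{1}$ projects the same characteristic vectors onto $\mathbf{1}^\perp$, and the Gram matrix of the $w_i$ is precisely the paper's $D$; so the underlying computation is identical, but you replace the positive-definiteness/rank step by the more transparent fact that nonzero pairwise orthogonal vectors in the $(n-1)$-dimensional space $\mathbf{1}^\perp$ number at most $n-1$, with $\Omega$ accounting for the single possible zero vector. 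A side benefit is that your argument needs no preliminary maximality step (the paper first adjoins $\Omega$ to a maximum family), and it correctly isolates where the ``$+1$'' comes from. For the lower bound, the paper invokes the known equivalence between Hadamard matrices of order $n$ and symmetric $2$-$(n-1,n/2-1,n/4-1)$ designs (citing Haemers) and adjoins a common point to all blocks, whereas you normalize the Hadamard matrix and read the sets off its rows, verifying $|A_i|=n/2$ and $|A_i\cap A_k|=n/4$ by the standard orthogonality count; this is self-contained where the paper's step is cited, and it yields essentially the same family. The one structural difference is that the paper's sets all share the point $n$, making the intersecting property immediate, while your sets require the separate observation that $n/4\geq 1$ for $n\geq 4$ (with $n=1,2$ handled directly), which you do make. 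Both constructions produce $n$ distinct sets satisfying (1.2), giving $g(n)=n$.
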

\begin{proof}
Let $\mathcal{F}=\{A_1,A_2,\ldots,A_t\}$ ($t=g(n)$) be a maximum intersecting family on $\Omega=\{1,2,\ldots,n\}$ satisfying (1.2). If $\Omega\notin\mathcal{F}$, then $\{A_1,A_2,\ldots,A_t,\Omega\}$ is a larger intersecting family satisfying (1.2). So $\Omega\in\mathcal{F}$. Without loss of generality, assume that $A_t=\Omega$. Let $B$ be the $n\times t$ (incidence) matrix with entries
\begin{eqnarray*}
(B)_{ij}=\begin{cases}1~~~~~~~~~~~~~~~~~~\mbox{if}~i\in A_j,\\
0~~~~~~~~~~~~~~~~~~\mbox{if}~i\notin A_j.\end{cases}
\end{eqnarray*}
Let $u=(|A_1|,|A_2|,\ldots,|A_t|)^\top$. By (1.2), we have
\begin{eqnarray*}
B^\top B=n^{-1}uu^\top+D,
\end{eqnarray*}
where $D$ is the diagonal matrix with entries
\begin{eqnarray*}
(D)_{ii}=|A_i|-n^{-1}|A_i|^2=|A_i|(1-n^{-1}|A_i|)~(i=1,2,\ldots,t).
\end{eqnarray*}

Since $0<|A_i|<n$ for each $i\in\{1,2,\ldots,t-1\}$ and $|A_t|=n$, we have
\begin{eqnarray*}
(D)_{ii}&>&0~(i=1,2,\ldots,t-1),\\
(D)_{tt}&=&0.
\end{eqnarray*}
For any nonzero vector $x=(x_1,x_2,\ldots,x_t)^\top\in\mathbb{R}^t$, we have
\begin{eqnarray*}
x^\top B^\top Bx&=&n^{-1}x^\top uu^\top x+x^\top Dx\\
&=&n^{-1}\left(\sum_{i=1}^t|A_i|x_i\right)^2+\sum_{i=1}^t(D)_{ii}x_i^2>0.
\end{eqnarray*}
Hence $B^\top B$ is positive definite. Since $B$ is an $n\times t$ matrix, we have
\begin{eqnarray*}
g(n)=t=\mbox{\rm rank}(B^\top B)=\mbox{\rm rank}(B)\leq n.
\end{eqnarray*}

If there exists a Hadamard matrix of order $n$, then it is known \cite{Haemers} that there exists a symmetric $2$-$(n-1,n/2-1,n/4-1)$ design. Then there exist subsets $B_1,B_2,\ldots,B_{n-1}\subseteq\{1,2,\ldots,n-1\}$ such that
\begin{eqnarray*}
|B_i|&=&n/2-1~(i=1,2,\ldots,n-1),\\
|B_i\cap B_j|&=&n/4-1~(1\leq i<j\leq n-1).
\end{eqnarray*}
Set $C_i=B_i\cup\{n\}$ ($i=1,2,\ldots,n-1$). Then $\mathcal{F}_0=\{C_1,C_2,\ldots,C_{n-1},\Omega\}$ is a maximum intersecting family on $\Omega=\{1,2,\ldots,n\}$ satisfying (1.2). In this case, we have $g(n)=n$.
\end{proof}
Since $f(n)=g(n)+1$, we can derive the following equivalent theorem from Theorem \ref{thm3.1}.
\begin{thm}\label{thm2.2}
For any positive integer $n$, we have
\begin{eqnarray*}
f(n)\leq n+1,
\end{eqnarray*}
and the equality holds if there exists a Hadamard matrix of order $n$.
\end{thm}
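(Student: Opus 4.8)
The plan is to derive Theorem~\ref{thm2.2} directly from Theorem~\ref{thm3.1}, since the two statements are linked by the single identity $f(n)=g(n)+1$ recorded in the introduction. All of the substantive work—the positive-definiteness argument bounding $\mathrm{rank}(B)$, and the construction of the extremal family from a symmetric $2$-$(n-1,n/2-1,n/4-1)$ design—has already been carried out in the proof of Theorem~\ref{thm3.1}, so what remains here is essentially a translation between the probabilistic and the set-theoretic formulations.

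First I would justify the identity $f(n)=g(n)+1$, as it is the only point at which any argument enters. For the bound $f(n)\geq g(n)+1$ I take a maximum intersecting family $\{A_1,\ldots,A_t\}$ satisfying (1.2) with $t=g(n)$ and adjoin $\emptyset$; since $\mathbb{P}(\emptyset\cap A)=0=\mathbb{P}(\emptyset)\mathbb{P}(A)$, the empty set is independent of every event, so $A_1,\ldots,A_t,\emptyset$ are $g(n)+1$ pairwise independent events. For the reverse bound $f(n)\leq g(n)+1$ I start from a maximum family of pairwise independent events and observe that (1.2) forces any \emph{disjoint} pair in it to contain an empty set: if $A_i\cap A_j=\emptyset$ then $n^{-1}|A_i||A_j|=0$, so $A_i=\emptyset$ or $A_j=\emptyset$. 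As the events are distinct, at most one of them equals $\emptyset$, and deleting it leaves a family of distinct nonempty sets that are pairwise intersecting and still satisfy (1.2)—an intersecting family of size at least $f(n)-1$. Hence $f(n)-1\leq g(n)$.

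With the identity in hand, Theorem~\ref{thm3.1} gives $g(n)\leq n$, so $f(n)=g(n)+1\leq n+1$; and when a Hadamard matrix of order $n$ exists, the same theorem yields $g(n)=n$, whence $f(n)=n+1$. I expect no genuine obstacle in this final step: the entire difficulty is concentrated in Theorem~\ref{thm3.1}, and Theorem~\ref{thm2.2} is merely its probabilistic restatement under the correspondence $f(n)=g(n)+1$.
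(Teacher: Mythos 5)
Your proposal is correct and follows exactly the paper's route: the paper proves Theorem~\ref{thm2.2} in one line by citing the identity $f(n)=g(n)+1$ (asserted in the introduction) together with Theorem~\ref{thm3.1}. In fact you are more careful than the paper, which only justifies the direction $f(n)\geq g(n)+1$ by adjoining $\emptyset$; your observation that (1.2) forces any disjoint pair to contain $\emptyset$, so that deleting the (at most one) empty set yields an intersecting family of size at least $f(n)-1$, supplies the converse inequality that the paper leaves implicit.
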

We can derive the following lower bound of $g(n)$ by using $2$-designs.
\begin{thm}\label{thm2.3}
If there exists a $2$-$(v,k,\lambda)$ design such that $\frac{\lambda(v-1)^2}{(k-1)^2}=n$ for some positive integer $n$, then
\begin{eqnarray*}
g(n)\geq v+1.
\end{eqnarray*}
\end{thm}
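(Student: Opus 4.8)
The plan is to build an explicit intersecting family of size $v+1$ on $\{1,2,\ldots,n\}$ satisfying (1.2) directly from the combinatorial structure of the design, using its \emph{dual} incidence. The starting observation is purely numerical: in a $2$-$(v,k,\lambda)$ design every point lies in the same number $r=\frac{\lambda(v-1)}{k-1}$ of blocks (the replication number), so the hypothesis $n=\frac{\lambda(v-1)^2}{(k-1)^2}$ is exactly the statement that $n=r^2/\lambda$. This identity is what makes the intersection numbers come out right, so I would record it first.

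Next I would set up the sets. Label the blocks of the design $1,2,\ldots,b$, where $b$ is the number of blocks, and for each point $p_i$ ($i=1,\ldots,v$) let $B_i$ be the set of indices of blocks through $p_i$. By the replication count $|B_i|=r$, and by the defining property of a $2$-design any two distinct points lie in exactly $\lambda$ common blocks, so $|B_i\cap B_j|=\lambda$ for $i\ne j$. These $B_i$ are subsets of $\{1,\ldots,b\}$; to place them in the sample space $\Omega=\{1,\ldots,n\}$ I would first verify $b\le n$ and then regard $\{1,\ldots,b\}\subseteq\Omega$, leaving the extra $n-b$ points outside every $B_i$. Padding by unused points changes neither $|B_i|$ nor $|B_i\cap B_j|$.

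With this embedding (1.2) is immediate, since $n^{-1}|B_i||B_j|=r^2/n=\lambda=|B_i\cap B_j|$ by the identity $n=r^2/\lambda$. Finally I would adjoin $\Omega$ itself; because $|B_i\cap\Omega|=|B_i|=n^{-1}|B_i|\,|\Omega|$, the full collection $\mathcal{F}=\{B_1,\ldots,B_v,\Omega\}$ still satisfies (1.2), and as $\lambda\ge 1$ forces $B_i\cap B_j\ne\emptyset$ it is an intersecting family. One also checks the $v+1$ members are distinct: for a proper design $r>\lambda$, so no two points lie in the same set of blocks, and each $B_i$ is a proper nonempty subset of $\Omega$. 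Hence $g(n)\ge|\mathcal{F}|=v+1$.

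The main things to nail down are the auxiliary facts supporting the embedding and distinctness, namely $b\le n$ and $r>\lambda$. Both reduce to the elementary inequality $v>k$ for a nontrivial design: from $r(k-1)=\lambda(v-1)$ one gets $r>\lambda$, and comparing $b=\frac{\lambda v(v-1)}{k(k-1)}$ with $n=r^2/\lambda=\frac{\lambda(v-1)^2}{(k-1)^2}$ reduces $b\le n$ to $v(k-1)\le k(v-1)$, i.e.\ to $v\ge k$. I expect this bookkeeping --- confirming $b\le n$ so that the sets $B_i$ genuinely fit inside $\{1,\ldots,n\}$ --- to be the only real obstacle; the intersection conditions themselves fall out of the single identity $n=r^2/\lambda$.
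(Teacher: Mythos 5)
Your proposal is correct and follows essentially the same route as the paper: both construct the dual sets $A_i=\{j : p_i\in B_j\}$ of block indices through each point, use $|A_i|=r$, $|A_i\cap A_j|=\lambda$ together with the identity $n=r^2/\lambda$ to get (1.2), verify $b\le n$ via the standard relations $vr=bk$ and $r(k-1)=\lambda(v-1)$ (your reduction to $v\ge k$ is the same computation in slightly different form), and adjoin $\Omega$. Your extra care about distinctness (via $r>\lambda$) is a detail the paper glosses over, but it does not change the substance of the argument.
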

\begin{proof}
Let $r=\frac{\lambda(v-1)}{(k-1)}$. Suppose that there exists a $2$-$(v,k,\lambda)$ design $(V,\mathcal{B})$ such that $r^2=\lambda n$ for some positive integer $n$, where $V=\{p_1,p_2,\ldots,p_v\}$ is the set of $v$ points, $\mathcal{B}=\{B_1,B_2,\ldots,B_b\}$ is the set of $b$ blocks. Define $A_i=\{j|p_i\in B_j\}$, $i=1,2,\ldots,v$. Then
\begin{eqnarray*}
|A_i|&=&r~(i=1,2,\ldots,v),\\
|A_i\cap A_j|&=&\lambda~(1\leq i<j\leq v).
\end{eqnarray*}
Since $vr=kb$ and $r(k-1)=\lambda(v-1)$, we have
\begin{eqnarray*}
\lambda n=r^2=\frac{b(r+(v-1)\lambda)}{v}\geq b\lambda.
\end{eqnarray*}
Hence $\mathcal{F}=\{A_1,\ldots,A_v,\Omega\}$ is an intersecting family on $\Omega=\{1,2,\ldots,n\}$ satisfying (1.2). Hence
\begin{eqnarray*}
g(n)\geq v+1.
\end{eqnarray*}
\end{proof}

\begin{example}
A symmetric $2$-$(q^2+q+1,q+1,1)$ design comes from geometries over a finite field $\mathbf{F}_q$, and such a design is called a projective plane of order $q$ (see \cite{Haemers}). By Theorems \ref{thm3.1} and \ref{thm2.3}, we have
\begin{eqnarray*}
q^2+q+2\leq g(q^2+2q+1)\leq q^2+2q+1
\end{eqnarray*}
if there exists a projective plane of order $q$.
\end{example}

\section{Remarks}
The \textit{clique number} of a graph $G$ is the maximum cardinality of cliques in $G$. For the set $\Omega=\{1,2,\ldots,n\}$, let $\mathcal{P}(\Omega)$ denote the power set of $\Omega$. The power set graph $G(\Omega)$ of $\Omega$ is defined as the graph with vertex set $\mathcal{P}(\Omega)$, and two vertices $A$ and $B$ are adjacent if and only if they satisfy the relation (1.2). It is easy to see that $f(n)$ equals to the clique number of $G(\Omega)$, and any maximum clique of $G(\Omega)$ must contain the vertices $\emptyset$ and $\Omega$.

The generalized Johnson graph \cite{Godsil} $J(n,r,s)$ is a graph whose vertices are all $r$-subsets of $\{1,2,\ldots,n\}$, and two vertices $A$ and $B$ are adjacent if and only if $|A\cap B|=s$. Generalized Johnson graphs are also called the uniform subset graphs \cite{Chen}. When $ns=r^2$ and $n>r>s$, $J(n,r,s)$ is an induced subgraph of the power set graph $G(\Omega)$. Hence we have the following lower bound of $f(n)$.
\begin{pro}\label{pro3.1}
Let $n>r>s$ be positive integers such that $ns=r^2$, and let $\omega(n,r,s)$ be the clique number of $J(n,r,s)$. Then
\begin{eqnarray*}
f(n)\geq\omega(n,r,s)+2.
\end{eqnarray*}
\end{pro}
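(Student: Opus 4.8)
The plan is to exhibit an explicit clique in the power set graph $G(\Omega)$ of size $\omega(n,r,s)+2$, exploiting the observation already recorded in the remarks that $f(n)$ equals the clique number of $G(\Omega)$. The starting point is the claim that, under the hypotheses $ns=r^2$ and $n>r>s$, the graph $J(n,r,s)$ sits inside $G(\Omega)$ as an induced subgraph. First I would make this identification precise: for two $r$-subsets $A,B$ of $\Omega$, the independence relation (1.2) reads $|A\cap B|=n^{-1}r^2$, and since $ns=r^2$ this is exactly $|A\cap B|=s$, the adjacency condition in $J(n,r,s)$. Hence any clique of $J(n,r,s)$, viewed inside $\mathcal{P}(\Omega)$, is automatically a clique of $G(\Omega)$ of the same size.

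Next I would take a maximum clique $\{A_1,\ldots,A_\omega\}$ of $J(n,r,s)$, where $\omega=\omega(n,r,s)$, and enlarge it by adjoining the two distinguished vertices $\emptyset$ and $\Omega$. The key verification is that both are adjacent in $G(\Omega)$ to everything in sight: $\emptyset$ satisfies (1.2) with any set $X$ because both sides vanish, and $\Omega$ satisfies (1.2) with any $X$ because $|\Omega\cap X|=|X|=n^{-1}|\Omega||X|$; moreover $\emptyset$ and $\Omega$ satisfy (1.2) with each other. Thus $\{\emptyset,\Omega,A_1,\ldots,A_\omega\}$ is again a clique of $G(\Omega)$.

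Finally I would confirm that these $\omega+2$ vertices are pairwise distinct, so that the clique genuinely has $\omega+2$ elements. This uses $0<r<n$, which holds because $r>s\geq 1$ and $n>r$: each $A_i$ has cardinality $r\notin\{0,n\}$, so none of them coincides with $\emptyset$ or $\Omega$, while $\emptyset\neq\Omega$ since $n\geq 3$. Therefore $G(\Omega)$ contains a clique of size $\omega(n,r,s)+2$, and because $f(n)$ is the clique number of $G(\Omega)$ we conclude $f(n)\geq\omega(n,r,s)+2$.

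I do not expect a genuine obstacle here: the entire content is the translation between the adjacency rule of $J(n,r,s)$ and relation (1.2), a translation that the normalization $ns=r^2$ is precisely engineered to make tight. The only point demanding care is the bookkeeping that shows $\emptyset$ and $\Omega$ are new vertices not already present among the $r$-subsets of the Johnson clique; this is exactly what the strict inequalities $n>r>s>0$ guarantee, and it is what produces the additive constant $2$ rather than $1$.
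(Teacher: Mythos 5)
Your proposal is correct and matches the paper's (largely implicit) argument: under $ns=r^2$ the adjacency rule of $J(n,r,s)$ coincides with relation (1.2), so $J(n,r,s)$ embeds as an induced subgraph of the power set graph $G(\Omega)$, and adjoining the two universal vertices $\emptyset$ and $\Omega$ to a maximum Johnson clique yields a clique of size $\omega(n,r,s)+2$ in $G(\Omega)$, whose clique number is $f(n)$. You merely make explicit the distinctness and adjacency checks that the paper leaves to the reader, which is a welcome addition rather than a deviation.
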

If there exists a Hadamard matrix of order $n$, then from the proof of Theorem \ref{thm3.1}, there exist subsets $C_1,C_2,\ldots,C_{n-1}\subseteq\{1,2,\ldots,n\}$ such that
\begin{eqnarray*}
|C_i|&=&n/2~(i=1,2,\ldots,n-1),\\
|C_i\cap C_j|&=&n/4~(1\leq i<j\leq n-1).
\end{eqnarray*}
Then $\omega(n,n/2,n/4)\geq n-1$. By Proposition \ref{pro3.1} and Theorem \ref{thm2.2}, we know that $\omega(n,n/2,n/4)=n-1$ if there exists a Hadamard matrix of order $n$.

If $n$ is a prime number, then by (1.2), we have $g(n)=2$. It is conjectured that the $n\times n$ Hadamard matrix exists for all $n$ divisible by $4$. By observing Theorem \ref{thm3.1}, we conjecture that $g(n)=n$ when $n$ is divisible by $4$.

\end{CJK*}
\end{spacing}
\end{document}